\documentclass[11pt]{amsart}
\usepackage{enumerate}
\usepackage[T1]{fontenc}
\usepackage[utf8]{inputenc}
\usepackage{lmodern}
\usepackage{amsmath,amssymb,amsthm,mathtools}
\usepackage{tikz}
\usepackage[margin=1.1in]{geometry}
\usepackage{microtype}
\usepackage[colorlinks=true,linkcolor=blue,citecolor=blue,urlcolor=blue]{hyperref}
\hypersetup{
  pdftitle={Note on Robins' Conjecture in All Dimensions},
  pdfauthor={Sinai Robins and Oleg Asipchuk},
  pdfsubject={Fine-lattice reconstruction of convex bodies from sampled Fourier transforms},
  pdfkeywords={Fourier transform, convex body, periodization, lattice sampling, multi-tiling, aliasing}
}

\numberwithin{equation}{section}

\newtheorem{theorem}{Theorem}[section]

\newtheorem{lemma}[theorem]{Lemma}
\newtheorem{conjecture}[theorem]{Conjecture}

\theoremstyle{definition}

\theoremstyle{remark}
\newtheorem{remark}[theorem]{Remark}

\newcommand{\R}{\mathbb{R}}
\newcommand{\Z}{\mathbb{Z}}
\newcommand{\T}{\mathbb{T}}
\newcommand{\one}{\mathbf{1}}
\newcommand{\diam}{\operatorname{diam}}
\newcommand{\interior}{\operatorname{int}}

\theoremstyle{definition}

\author{Oleg Asipchuk}
\author{Sinai Robins}
\thanks{The second author gratefully acknowledges the support of the
São Paulo Research Foundation (FAPESP), Grant No.~2023/03167-5.}
\title{A note on a sparse sampling conjecture}

\subjclass[2020]{Primary 42B10; Secondary 42C15, 52A20, 52C22}
\keywords{Fourier transform, convex body, periodization, lattice sampling,
  multi-tiling, aliasing}
\date{August 25, 2026}

\begin{document}

\begin{abstract}
A conjecture made by the second author was that two convex, centrally symmetric bodies of positive
measure which are not multi-tilers must agree up to a rigid motion whenever
the Fourier transforms of their indicator functions agree on \(\Z^d\).
Counterexamples were constructed by the first author in dimensions
\(d\geq4\), showing that the lattice \(\Z^d\) can be too sparse.  Here we
show that the initial conjecture is also false in the remaining dimensions
\(2\) and \(3\).

However, we prove a related positive result, with a stronger conclusion and
without either central symmetry or the non-multitiling assumption.  Let
\(\mathcal L\subset\R^d\) be a full-rank lattice, and let \(\mathcal P,Q\subset\R^d\) be
connected finite unions of convex bodies such that no two distinct points of
either set are congruent modulo \(\mathcal L\).  If the Fourier transforms of their
indicator functions agree on the dual lattice \(\mathcal L^*\), then \(Q=\mathcal P+\ell\) for
some \(\ell\in \mathcal L\).  In particular, if both sets are symmetric about the
origin, then \(\mathcal P=Q\).
\end{abstract}

\maketitle


\section{Introduction}
We recall that measurable set \(\mathcal K\subset\R^d\) is called a {\bf multi-tiler}
  if
there is a discrete translation multiset \(\Lambda\subset\R^d\) such that
$
\sum_{\lambda\in\Lambda}\one_{\mathcal K}(x-\lambda)=k$
for almost every $x\in\R^d$.
 A {\bf body \(\mathcal K\)} is any compact subset of Euclidean space that has nonempty interior.
The conjecture made by the second author is the following.

\begin{conjecture}[Robins]
\label{conjecture Robins}
Let \(\mathcal P,Q\subset\R^d\) be convex, centrally symmetric bodies of positive
measure, neither of which is a multi-tiler.  If $\widehat{\one_{\mathcal P}}(n)=\widehat{\one_Q}(n)$ for every $n\in\Z^d$, then \(\mathcal P\) and \(Q\) agree, up to a rigid motion and a set of measure zero.
\end{conjecture}

The first author disproved Conjecture \ref{conjecture Robins} in dimensions \(d\geq4\) in
\cite{Asipchuk2026}.   Here we show 
that Conjecture \ref{conjecture Robins}
 is false in every dimension \(d\geq2\).
However, there is a nearby positive result for arbitrary full-rank
lattices, with a stronger conclusion.  Given a body \(\mathcal K\subset\R^d\), consider its {\bf difference body}
\(\mathcal K-\mathcal K:=\{x-y:x,y\in \mathcal K\}\).  We say that \(\mathcal K\) has the {\bf sparse lattice
property} with respect to a lattice \(\mathcal L\) if
\begin{equation}
\label{eq:main-no-aliasing-hypothesis}
(\mathcal K-\mathcal K)\cap \mathcal L=\{0\}.
\end{equation}
Equivalently, no two distinct points of \(\mathcal K\) are congruent modulo \(\mathcal L\).

\begin{theorem}
\label{thm:general-lattice-reconstruction}
Let \(\mathcal L\subset\R^d\) be a full-rank lattice.
Let \(\mathcal P,Q\subset\R^d\) be connected finite unions of convex bodies, each
satisfying the sparse lattice property
\eqref{eq:main-no-aliasing-hypothesis} with respect to \(\mathcal L\).
Suppose that
\begin{equation}
\widehat{\one_{\mathcal P}}(\xi)
=
\widehat{\one_Q}(\xi)
\qquad
\text{for every }\xi\in \mathcal L^*,
\label{eq:general-lattice-sampling}
\end{equation}
the dual lattice of $\mathcal L$.  Then there is an \(\ell\in \mathcal L\) such that $Q=\mathcal P+\ell$.
\end{theorem}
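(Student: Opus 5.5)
The plan is to pass to the torus \(\T_{\mathcal L}:=\R^d/\mathcal L\) by periodization. For a compact set \(\mathcal K\), define
\[
F_{\mathcal K}(x)=\sum_{\ell\in\mathcal L}\one_{\mathcal K}(x+\ell).
\]
This is an \(\mathcal L\)-periodic function, integrable on a fundamental domain. Its Fourier coefficients at \(\xi\in\mathcal L^*\) are \(\frac{1}{\operatorname{vol}(\R^d/\mathcal L)}\,\widehat{\one_{\mathcal K}}(\xi)\); this is the standard Poisson-summation computation, obtained by unfolding the integral over a fundamental domain.

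Hypothesis \eqref{eq:general-lattice-sampling} therefore says that \(F_{\mathcal P}\) and \(F_Q\) have identical Fourier coefficients. By uniqueness of Fourier coefficients for \(L^1\) functions on the torus, \(F_{\mathcal P}=F_Q\) almost everywhere.

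The sparse lattice property \eqref{eq:main-no-aliasing-hypothesis} means that the translates \(\mathcal K+\ell\) are pairwise disjoint. Hence \(F_{\mathcal K}\) is itself an indicator function, namely of \(\pi(\mathcal K)\), where \(\pi:\R^d\to\T_{\mathcal L}\) is the quotient map and \(\pi|_{\mathcal K}\) is injective. We conclude that \(\pi(\mathcal P)\) and \(\pi(Q)\) agree up to a null set. Equivalently,
\[
\mathcal P+\mathcal L \quad\text{and}\quad Q+\mathcal L
\]
agree up to a null set in \(\R^d\).

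Next we upgrade from almost-everywhere equality to genuine equality. A finite union of convex bodies is the closure of its interior, since each convex body is. Hence \(\mathcal P+\mathcal L\) is a locally finite union of such sets and is the closure of its interior. Two closed sets of this kind that agree almost everywhere have the same interior points up to measure zero, and so they have the same closures. This gives
\[
\mathcal P+\mathcal L=Q+\mathcal L \quad\text{exactly}.
\]
Local finiteness holds because \(\mathcal P\) is compact and \(\mathcal L\) is discrete.

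Now we use connectedness to pin down a single translate. Since the translates \(Q+\ell\) are disjoint, write
\[
\mathcal P=\bigsqcup_{\ell\in\mathcal L}\bigl(\mathcal P\cap(Q+\ell)\bigr).
\]
Only finitely many pieces are nonempty, and each piece is closed. A connected set cannot be a finite disjoint union of two or more nonempty closed sets, so \(\mathcal P\subseteq Q+\ell_0\) for a single \(\ell_0\).

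Symmetrically, \(Q\subseteq\mathcal P+\ell_1\). This gives
\[
\mathcal P\subseteq\mathcal P+\ell_0+\ell_1 .
\]
Comparing measures, or simply noting that a compact set cannot be strictly contained in a translate of itself by a nonzero vector (compare the extreme point in the direction \(\ell_0+\ell_1\)), forces \(\ell_0+\ell_1=0\). Hence \(Q=\mathcal P-\ell_0\), as required.

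The step I expect to be the most delicate is the passage from almost-everywhere equality to set equality. This is where the regularity coming from convexity is needed: the bodies are closures of their interiors, and the union is locally finite. It is also the reason the theorem is stated for finite unions of convex bodies rather than arbitrary measurable sets.

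The symmetric corollary is immediate. If \(\mathcal P=-\mathcal P\) and \(Q=-Q\), then \(\mathcal P+\ell=-\mathcal P-\ell=\mathcal P-\ell\), so \(\mathcal P\) is invariant under translation by \(2\ell\). A compact set can only be invariant under translation by the zero vector, so \(2\ell=0\), hence \(\ell=0\) and \(\mathcal P=Q\).
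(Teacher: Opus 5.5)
Your proof is correct and follows essentially the same route as the paper. Both arguments go through periodization with uniqueness of Fourier coefficients on $\T_{\mathcal L}$, the no-aliasing identification of the periodization with $\one_{\pi_{\mathcal L}(\mathcal K)}$, a regular-closedness argument upgrading almost-everywhere equality to exact equality, and connectedness to single out one lattice translate. The differences are only in packaging: you work with the locally finite lift $\mathcal P+\mathcal L$ in $\R^d$ instead of $\pi_{\mathcal L}(\mathcal P)$ on the torus, and your closed partition $\mathcal P=\bigsqcup_{\ell}\bigl(\mathcal P\cap(Q+\ell)\bigr)$ is just the level sets of the paper's map $x\mapsto\Phi(x)-x$, with your two-sided containment step replacing the paper's direct use of the bijection $\Phi\colon\mathcal P\to Q$.
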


\begin{remark}
As our proof of Theorem~\ref{thm:general-lattice-reconstruction} shows, this result
holds even more generally for connected bodies \(\mathcal P,Q\) satisfying
\(\overline{\interior(\mathcal P)}=\mathcal P\) and
\(\overline{\interior(Q)}=Q\), together with the sparse lattice property
\eqref{eq:main-no-aliasing-hypothesis}.
\hfill \(\lozenge\)
\end{remark}

\begin{remark}[The origin-symmetric case]
\label{rem:origin-symmetric-consequence}
If, in addition, \(\mathcal P=-\mathcal P\) and \(Q=-Q\), then the conclusion of
Theorem~\ref{thm:general-lattice-reconstruction} strengthens to $\mathcal P=Q.$
Indeed, if \(Q=\mathcal P+\ell\) with \(\ell\in \mathcal L\), then origin symmetry gives
\(\mathcal P+\ell=\mathcal P-\ell\), and hence \(\mathcal P=\mathcal P-2\ell\).  A nonempty compact set cannot
be invariant under a nonzero translation, so \(\ell=0\).
\hfill \(\lozenge\)
\end{remark}

\begin{remark}
We can give a more intuitive diameter bound by using the length of a shortest vector in the lattice $\mathcal L$,  defined by
$
\lambda_1(\mathcal L)
:=
\min\bigl\{
\|\ell\|:\ell\in \mathcal L\setminus\{0\}
\bigr\}
$.
The diameter bound
\begin{equation*}
\max\{\diam(\mathcal P),\diam(Q)\}
<
\lambda_1(\mathcal L)
\end{equation*}
implies the sparse lattice condition
\eqref{eq:main-no-aliasing-hypothesis}.
Indeed, if \(x,y\in \mathcal P\) and \(x-y\in \mathcal L\setminus\{0\}\), then
\begin{equation*}
\|x-y\|
\geq
\lambda_1(\mathcal L)
>
\diam(\mathcal P),
\end{equation*}
which is impossible. The same argument applies to \(Q\).
\hfill $\lozenge$
\end{remark}

\bigskip
The underlying obstruction in Theorem
\ref{thm:general-lattice-reconstruction}
is aliasing.  We pause to give some added intuition here.
The values of
\(\widehat{\one_{\mathcal K}}\) on \(\mathcal L^*\) determine the \(\mathcal L\)-periodization of
\(\one_{\mathcal K}\), not the individual set \(\mathcal K\).  When
\eqref{eq:main-no-aliasing-hypothesis} holds, the quotient map
\(\R^d\to\R^d/\mathcal L\) is injective on \(\mathcal K\), so the periodization has no
self-overlap.  It then determines \(\mathcal K\) up to the unavoidable ambiguity of
translation by an element of \(\mathcal L\).

To fix notation, for a measurable set \(\mathcal K\subset\R^d\) of finite measure, we use the
Fourier-transform convention
\begin{equation*}
\widehat{\one_{\mathcal K}}(\xi)
:=
\int_{\mathcal K} e^{-2\pi i\langle x,\xi\rangle}\,dx,
\qquad \xi\in\R^d.
\end{equation*}
Fourier transforms of polytopes, Poisson summation, translational tilings,
and multivariate sampling are developed from a common geometric viewpoint
in \cite{Robins2024}; see also \cite{Kolountzakis2004} for the
Fourier-analytic study of translational tilings and
\cite{SteinShakarchi2003} for the underlying Fourier analysis.


\section{Counterexamples to Conjecture \ref{conjecture Robins} in dimensions two and three}
\label{sec:counterexamples-dimensions-two-three}

We now give the planar and three-dimensional counterexamples invoked in the
introduction.  We first recall the standard Fourier criterion for lattice multi-tiling; see,
for example, \cite[Formula~(1.5) in Lecture~1]{Kolountzakis2004}.

\begin{lemma}[Fourier criterion for lattice multi-tiling]
\label{lem:Fourier-criterion-lattice-multitiling}
Let \(\mathcal K\subset\R^d\) be a measurable set of positive finite measure, let
\(\Lambda\subset\R^d\) be a full-rank lattice, and let \(k\in\mathbb{N}\).
Then
\begin{equation*}
\mathcal K\text{ \(k\)-tiles \(\R^d\) by translations with \(\Lambda\)}
\quad\Longleftrightarrow\quad
\widehat{\one_{\mathcal K}}(\xi)=0
\quad\text{for every }\xi\in\Lambda^*\setminus\{0\},
\end{equation*}
where
\begin{equation*}
\Lambda^*
:=
\bigl\{\xi\in\R^d:\langle\xi,\lambda\rangle\in\Z
\text{ for every }\lambda\in\Lambda\bigr\}
\end{equation*}
is the dual lattice.
\end{lemma}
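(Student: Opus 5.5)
We prove the lemma by forming the $\Lambda$-periodization of $\one_{\mathcal K}$,
\begin{equation*}
F(x):=\sum_{\lambda\in\Lambda}\one_{\mathcal K}(x-\lambda),
\end{equation*}
and computing its Fourier coefficients. By definition, $\mathcal K$ $k$-tiles with $\Lambda$ exactly when $F=k$ almost everywhere. Both directions then follow from the uniqueness theorem for Fourier series on the torus $\R^d/\Lambda$.

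\emph{Step 1: $F$ is integrable on the torus.} Let $D$ be a fundamental domain of $\Lambda$; its volume is $\det\Lambda$. By Tonelli's theorem,
\begin{equation*}
\int_D F = \sum_{\lambda\in\Lambda}\int_{D-\lambda}\one_{\mathcal K} = |\mathcal K| < \infty .
\end{equation*}
Hence the series defining $F$ converges almost everywhere, $F$ is finite almost everywhere, and $F\in L^1(\R^d/\Lambda)$.

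\emph{Step 2: the Fourier coefficients of $F$.} The characters of $\R^d/\Lambda$ are $e^{2\pi i\langle x,\xi\rangle}$ with $\xi\in\Lambda^*$. For each such $\xi$, the function $x\mapsto e^{-2\pi i\langle x,\xi\rangle}$ is $\Lambda$-periodic, because $\langle\lambda,\xi\rangle\in\Z$ for every $\lambda\in\Lambda$. Using Fubini's theorem (justified by the $L^1$ bound of Step 1) and unfolding the sum over $\lambda$ into an integral over $\R^d$, we get
\begin{equation*}
\frac{1}{\det\Lambda}\int_D F(x)e^{-2\pi i\langle x,\xi\rangle}\,dx=\frac{1}{\det\Lambda}\,\widehat{\one_{\mathcal K}}(\xi).
\end{equation*}
This is the weak form of Poisson summation, and it needs no decay assumption on $\widehat{\one_{\mathcal K}}$.

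\emph{Step 3 ($\Rightarrow$).} Suppose $F=k$ almost everywhere. Then every Fourier coefficient of $F$ with $\xi\neq0$ is the corresponding coefficient of a constant, which is $0$. By Step 2, $\widehat{\one_{\mathcal K}}(\xi)=0$ for every $\xi\in\Lambda^*\setminus\{0\}$.

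\emph{Step 4 ($\Leftarrow$).} Suppose $\widehat{\one_{\mathcal K}}$ vanishes on $\Lambda^*\setminus\{0\}$. By Step 2, $F$ has the same Fourier coefficients as the constant $c:=|\mathcal K|/\det\Lambda$. By uniqueness of Fourier coefficients for $L^1$ functions on the torus, $F=c$ almost everywhere.

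It remains to check that $c$ is an integer equal to $k$. Since $F$ takes values in $\Z_{\ge0}\cup\{\infty\}$ and is finite almost everywhere, an almost-everywhere constant value of $F$ must be a nonnegative integer. It is positive because $|\mathcal K|>0$. So $\mathcal K$ multi-tiles with multiplicity $c=|\mathcal K|/\det\Lambda$. This also shows that the $k$ in the statement is forced to equal this value: the equivalence holds for $k=|\mathcal K|/\det\Lambda$, and for any other $k$ the left-hand side is false.

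\emph{Main obstacle.} The only delicate point is the analytic justification in Step 2. A naive Poisson summation would need pointwise convergence of the Fourier series, which fails here. We avoid this by working entirely with $L^1$ Fourier coefficients together with the uniqueness theorem, so no summability of $\widehat{\one_{\mathcal K}}$ over $\Lambda^*$ is required.
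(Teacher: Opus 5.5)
Your proof is correct and is the standard argument: the paper does not prove this lemma (it cites Kolountzakis), but its periodization identity (Lemma~\ref{lem:general-periodization-identity}) combined with uniqueness of Fourier coefficients in $L^1(\T_{\Lambda})$ is exactly your Steps 2--4, and your Tonelli bound in Step 1 extends that identity from compactly supported $f$ to $\one_{\mathcal K}$ for an arbitrary set $\mathcal K$ of finite measure. You are also right that the literal statement needs $k=|\mathcal K|/\det\Lambda$: for example, $[0,1]$ with $\Lambda=\Z$ and $k=2$ satisfies the right-hand side but does not $2$-tile, so the equivalence should be read with $k$ equal to this forced value, as you observed.
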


\begin{figure}[ht]
\centering
\begin{minipage}[t]{0.49\textwidth}
\centering
\begin{tikzpicture}[scale=0.49]
\fill[blue!15]
(-3.5,0.5)--(-2.5,2.5)--(2.5,2.5)--(3.5,0.5)--
({pi+0.5},0)--(3.5,-0.5)--(2.5,-2.5)--(-2.5,-2.5)--
(-3.5,-0.5)--({-pi-0.5},0)--cycle;

\draw[step=1,gray!55,very thin] (-4.5,-3.5) grid (4.5,3.5);
\draw[blue,thick]
(-3.5,0.5)--(-2.5,2.5)--(2.5,2.5)--(3.5,0.5);
\draw[blue,thick]
(3.5,-0.5)--(2.5,-2.5)--(-2.5,-2.5)--(-3.5,-0.5);
\draw[red,thick]
(-3.5,0.5)--({-pi-0.5},0)--(-3.5,-0.5);
\draw[red,thick]
(3.5,0.5)--({pi+0.5},0)--(3.5,-0.5);
\end{tikzpicture}

\smallskip
\textup{(a)} \(\mathcal P=O_1\cup D\)
\end{minipage}
\hfill
\begin{minipage}[t]{0.49\textwidth}
\centering
\begin{tikzpicture}[scale=0.49]

\fill[blue!15]
(-3.5,0.5)--(-0.5,3.5)--(0.5,3.5)--(3.5,0.5)--
({pi+0.5},0)--(3.5,-0.5)--(0.5,-3.5)--(-0.5,-3.5)--
(-3.5,-0.5)--({-pi-0.5},0)--cycle;

\draw[red,thick]
(3.5,0.5)--({pi+0.5},0)--(3.5,-0.5);
\draw[step=1,gray!55,very thin] (-4.5,-4.5) grid (4.5,4.5);
\draw[blue,thick]
(-3.5,0.5)--(-0.5,3.5)--(0.5,3.5)--(3.5,0.5);
\draw[blue,thick]
(3.5,-0.5)--(0.5,-3.5)--(-0.5,-3.5)--(-3.5,-0.5);
\draw[red,thick]
(-3.5,0.5)--({-pi-0.5},0)--(-3.5,-0.5);
\end{tikzpicture}

\smallskip
\textup{(b)} \(Q=O_2\cup D\)
\end{minipage}
\caption{The bodies \(\mathcal P=O_1\cup D\) and \(Q=O_2\cup D\).  The octagonal
parts of the boundaries are shown in blue, the two additional triangular
parts are shown in red, and the grid has unit spacing.}
\label{fig:planar-counterexample-bodies}
\end{figure}

We shall also use Bolle's characterization of multiple lattice tiles
\cite{Bolle1994}.

\begin{theorem}[Bolle]
\label{thm:Bolle-multiple-lattice-tile}
Let \(\mathcal K\subset\R^2\) be a centrally symmetric convex polygon, and let
\(\Lambda\subset\R^2\) be a lattice.  Then \(\mathcal K\) is a multiple lattice tile
with respect to \(\Lambda\), with some multiplicity \(w\in\mathbb{N}\), if
and only if the following conditions hold for every edge \(e\) of \(\mathcal K\):
\begin{enumerate}
\item[\textup{(i)}]
The relative interior of \(e\) contains a point of
\(\tfrac12\Lambda\).

\item[\textup{(ii)}]
If the midpoint of \(e\) does not belong to \(\tfrac12\Lambda\), then the
edge vector of \(e\) belongs to \(\Lambda\).
\end{enumerate}
\end{theorem}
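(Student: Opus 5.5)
We would prove both directions through the Fourier criterion, Lemma~\ref{lem:Fourier-criterion-lattice-multitiling}: \(\mathcal K\) multi-tiles with \(\Lambda\) if and only if \(\widehat{\one_{\mathcal K}}\) vanishes on \(\Lambda^*\setminus\{0\}\).

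\textbf{Edge formula.} Write \(\mathcal K\) with edges \(e\), each having edge vector \(v_e\), midpoint \(m_e\), outer unit normal \(n_e\) and length \(|e|\). Applying the divergence theorem to the field \(x\mapsto e^{-2\pi i\langle x,\xi\rangle}\xi\) gives, for \(\xi\neq0\),
\begin{equation*}
\widehat{\one_{\mathcal K}}(\xi)=\frac{-1}{2\pi i\|\xi\|^2}\sum_{e}\langle n_e,\xi\rangle\,|e|\,e^{-2\pi i\langle m_e,\xi\rangle}\,\frac{\sin(\pi\langle v_e,\xi\rangle)}{\pi\langle v_e,\xi\rangle},
\end{equation*}
where the sinc factor is read as \(1\) when \(\langle v_e,\xi\rangle=0\). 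Central symmetry pairs each edge \(e\) with an opposite edge \(-e\), which has midpoint \(-m_e\), normal \(-n_e\) and the same length. Summing each pair replaces the two exponentials by \(-2i\sin(2\pi\langle m_e,\xi\rangle)\). So, up to a nonzero factor, \(\widehat{\one_{\mathcal K}}(\xi)\) is
\begin{equation*}
S(\xi):=\sum_{\text{pairs }\{e,-e\}}\langle n_e,\xi\rangle\,|e|\,\sin(2\pi\langle m_e,\xi\rangle)\,\frac{\sin(\pi\langle v_e,\xi\rangle)}{\pi\langle v_e,\xi\rangle}.
\end{equation*}

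\textbf{Sufficiency.} We show that conditions (i) and (ii) make every term of \(S\) vanish at each \(\xi\in\Lambda^*\setminus\{0\}\).
\begin{itemize}
\item If \(m_e\in\tfrac12\Lambda\), then \(\langle m_e,\xi\rangle\in\tfrac12\Z\), so the sine factor is zero.
\item Otherwise (ii) gives \(v_e\in\Lambda\), so \(\langle v_e,\xi\rangle\in\Z\). If \(\langle v_e,\xi\rangle\neq0\), the sinc factor vanishes.
\item In the remaining case \(\langle v_e,\xi\rangle=0\), the form \(\langle\cdot,\xi\rangle\) is constant on \(e\). Take the point \(p\in\operatorname{relint}(e)\cap\tfrac12\Lambda\) supplied by (i). Then \(\langle m_e,\xi\rangle=\langle p,\xi\rangle\in\tfrac12\Z\), and again the sine factor is zero.
\end{itemize}
The Fourier criterion then gives multi-tiling, with multiplicity \(w=|\mathcal K|/\det\Lambda\).

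\textbf{Necessity.} This is the main obstacle: assuming \(S\equiv0\) on \(\Lambda^*\setminus\{0\}\), we must separate the contributions of different edge pairs. The plan is to test along rays and use growth rates.
\begin{itemize}
\item Fix a primitive \(\eta\in\Lambda^*\) and set \(\xi=k\eta\). At most one edge pair satisfies \(\langle v_e,\eta\rangle=0\). Its term is \(k\langle n_e,\eta\rangle|e|\sin(2\pi k\langle m_e,\eta\rangle)\), which is of order \(k\) unless the sine vanishes. Every other term is \(O(1)\).
\item Vanishing for all \(k\) therefore forces \(\sin(2\pi k\langle m_e,\eta\rangle)\to0\) along \(k\). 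By Weyl equidistribution, or simply at \(k=1\) after a cleaner bookkeeping, this gives \(\langle m_e,\eta\rangle\in\tfrac12\Z\) whenever \(\eta\perp e\).
\item If no nonzero \(\eta\in\Lambda^*\) is orthogonal to \(e\), the edge direction is irrational with respect to \(\Lambda\). We would first exclude this using the perturbed rays \(\xi=k\eta+\zeta\) and a Kronecker-type argument. The \(\langle n_e,\xi\rangle\) growth of such an edge cannot be cancelled by the remaining terms, which are bounded almost-periodic functions.
\end{itemize}

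\textbf{Deriving (i) and (ii).} With \(\langle m_e,\eta\rangle\in\tfrac12\Z\) established for the primitive \(\eta\perp e\), the midpoint \(m_e\) lies on a line of \(\tfrac12\Lambda\) parallel to \(e\). The point spacing on that line is the primitive vector \(u\in\tfrac12\Lambda\) in direction \(v_e\). Now remove the dominant terms and examine the next order along \(\xi=k\eta+\zeta\), with \(\zeta\in\Lambda^*\) chosen so that \(\langle v_e,\zeta\rangle=1\). The terms of this pair then reduce to a trigonometric expression in the position of \(m_e\) relative to \(\tfrac12\Lambda\) and in \(v_e\). Its vanishing should force one of two outcomes: \(m_e\in\tfrac12\Lambda\), or \(v_e\in\Lambda\) together with a lattice point of \(\tfrac12\Lambda\) inside \(\operatorname{relint}(e)\), which is (ii) and (i). Making this second-order extraction rigorous, including the case where the relevant lattice points fall on vertices, is the step we expect to require the most care. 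If it resists, we would fall back on Bolle's original combinatorial argument, which counts lattice translates across each edge.
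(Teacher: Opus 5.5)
The paper does not prove this statement; it quotes it from \cite{Bolle1994}, so your argument has to stand on its own. Your sufficiency half is correct. The edge formula, the pairing of opposite edges into $-2i\sin(2\pi\langle m_e,\xi\rangle)$, and the three-case check all hold, and the Fourier criterion then gives multi-tiling with $w=|\mathcal K|/\det\Lambda$.

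The necessity half has a genuine gap, and part of it aims at a false statement. You propose to \emph{exclude} edges whose direction is irrational with respect to $\Lambda$, but such edges occur in genuine multi-tiles. Take $\Lambda=\Z^2$ and a small irrational $t$. The convex hexagon with vertices $\pm(2,t)$, $\pm(1,2-t)$, $\pm(-1,2+t)$ has edge midpoints $\pm(\tfrac32,1)$, $\pm(0,2)$, $\pm(-\tfrac32,1)$, all in $\tfrac12\Z^2$. By your own sufficiency argument it $12$-tiles, yet its edge vectors $(-1,2-2t)$, $(-2,2t)$, $(-1,-2-2t)$ all point in irrational directions. The growth you invoke is not ``cancelled'' by the other terms here; it is absent, because the sine factor vanishes on $\Lambda^*$. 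The correct conclusion in the irrational case is $m_e\in\tfrac12\Lambda$. To get it, work on the strip $W_\epsilon=\{\xi\in\Lambda^*:|\langle v_e,\xi\rangle|<\epsilon\}$. For large $\xi$ in $W_\epsilon$ every other pair is $O(1)$, so $|\sin(2\pi\langle m_e,\xi\rangle)|=O(\|\xi\|^{-1})$, and the character $\chi(\xi)=e^{4\pi i\langle m_e,\xi\rangle}$ tends to $1$ along $W_\epsilon$. Taking differences gives $\chi\equiv1$ on $W_{\epsilon/2}$. Since $\langle v_e,\cdot\rangle$ is injective on $\Lambda^*$ with dense image, $W_{\epsilon/2}$ generates $\Lambda^*$, and therefore $2m_e\in\Lambda$.

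The step you flag as hardest is left as a hope, and your normalization actually defeats it. If $\langle v_e,\zeta\rangle=1$, the sinc factor of the $e$-pair is $\sin(\pi)/\pi=0$ along the whole ray $k\eta+\zeta$, so that ray says nothing about $e$; such a $\zeta$ also need not exist. Instead, let $\delta>0$ generate $\langle v_e,\Lambda^*\rangle=\delta\Z$ and take $\langle v_e,\zeta\rangle=\delta$, so that $\{\eta,\zeta\}$ is a basis of $\Lambda^*$. Using $\langle m_e,\eta\rangle\in\tfrac12\Z$, the $e$-pair term at $k\eta+\zeta$ is $\pm\langle n_e,k\eta+\zeta\rangle\,|e|\,\sin(2\pi\langle m_e,\zeta\rangle)\,\frac{\sin\pi\delta}{\pi\delta}$. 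This is linear in $k$, while all other pairs stay bounded, so the extraction is first-order, not second-order. It forces one of two outcomes:
\begin{itemize}
\item $\langle m_e,\zeta\rangle\in\tfrac12\Z$, which gives $m_e\in\tfrac12\Lambda$;
\item $\delta\in\Z$, which gives $v_e=\delta a\in\Lambda$ with $a$ primitive.
\end{itemize}
In the second case $\operatorname{relint}(e)$ is an open segment of length $\delta\|a\|\ge\|a\|$ on a line carrying points of $\tfrac12\Lambda$ spaced $\tfrac12\|a\|$ apart. So (i) holds automatically, and vertices cause no difficulty.

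Separately, ``at $k=1$'' cannot replace the asymptotic argument in your first bullet. What you need there is that $\sin(2\pi k\alpha)\to0$ forces $2\alpha\in\Z$. With these repairs the Fourier route does prove necessity; as written, it does not.
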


\begin{theorem}
\label{prop:counterexamples-dimensions-two-three}
Conjecture \ref{conjecture Robins}
 is false in dimensions \(d=2\) and \(d=3\).
\end{theorem}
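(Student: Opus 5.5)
The idea, suggested by Figure~\ref{fig:planar-counterexample-bodies}, is to take two centrally symmetric convex octagons \(O_1,O_2\) that are both multiple lattice tiles for \(\Z^2\) with the same area (hence the same multiplicity), and to add a common symmetric piece \(D\) (the two small red triangles) so that \(\mathcal P=O_1\cup D\) and \(Q=O_2\cup D\) are convex, centrally symmetric and not multi-tilers, yet not congruent. From the figure I read off \(O_1\) with vertices \((\pm2.5,\pm2.5)\), \((\pm3.5,\pm0.5)\) and \(O_2\) with vertices \((\pm0.5,\pm3.5)\), \((\pm3.5,\pm0.5)\). The triangles \(D\) have apices \((\pm(\pi+\tfrac12),0)\) over the edges from \((\pm3.5,-0.5)\) to \((\pm3.5,0.5)\). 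Convexity holds because the apex lies only slightly outside the vertical edge, and the new edges to \((\pi+\tfrac12,0)\) have slopes \(\pm 0.5/(\pi-3)\approx\pm3.5\), which is steeper than the adjacent edges (slope \(\pm2\), respectively \(\pm1\)). Central symmetry is clear.

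\textbf{Step 1.} Verify by Theorem~\ref{thm:Bolle-multiple-lattice-tile} that \(O_1\) and \(O_2\) are multiple lattice tiles for \(\Lambda=\Z^2\). Every edge has half-integer endpoints, and its midpoint lies in \(\tfrac12\Z^2\); for instance \((3,1.5)\), \((0,2.5)\), \((3.5,0)\) and \((2,2)\) are such midpoints. Condition (i) holds since the midpoint is interior to the edge, and (ii) is vacuous. Both octagons have area \(21\) (compute via the shoelace formula or by decomposition), so each \(21\)-tiles \(\R^2\) by \(\Z^2\).

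\textbf{Step 2.} Apply Lemma~\ref{lem:Fourier-criterion-lattice-multitiling} to get \(\widehat{\one_{O_i}}(n)=0\) for \(n\neq0\), and \(\widehat{\one_{O_1}}(0)=\widehat{\one_{O_2}}(0)\). Since \(D\) meets each \(O_i\) only in a null set, \(\widehat{\one_{\mathcal P}}(n)=\widehat{\one_{O_1}}(n)+\widehat{\one_D}(n)=\widehat{\one_Q}(n)\) for all \(n\in\Z^2\).

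\textbf{Step 3.} Show that \(\mathcal P\) and \(Q\) are not multi-tilers, which I expect to be the main obstacle. The cleanest route is that a convex multi-tiler must be a centrally symmetric polytope whose facets are centrally symmetric (Gravin--Robins--Shiryaev; in the plane, a symmetric polygon). For a lattice multi-tiler this also follows directly from Bolle's theorem. \(\mathcal P\) is a symmetric decagon, so the planar criterion applies: the red edge from \((3.5,0.5)\) to \((\pi+\tfrac12,0)\) has the irrational endpoint coordinate \(\pi\). A general multi-tiling by an arbitrary discrete multiset of a convex polygon is known, by Kolountzakis, to be equivalent to a lattice multi-tiling for some lattice. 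Edge conditions (i)--(ii) for the lattice \(\Lambda\) then fail because of incommensurability between the \(\pi\)-dependent red edges and the half-integer blue ones. If this Bolle analysis becomes messy, I would fall back on the Fourier side. Then \(\widehat{\one_{\mathcal P}}=\widehat{\one_{O_1}}+\widehat{\one_D}\), and one would show that its zero set cannot contain \(\Lambda^*\setminus\{0\}\) for any lattice \(\Lambda\).

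\textbf{Step 4.} Show that \(\mathcal P\) and \(Q\) are not congruent. They have different edge-length multisets: \(\mathcal P\) has two horizontal edges of length \(5\), while \(Q\) has edges of length \(1\) and \(3\sqrt2\). Since a rigid motion preserves edge lengths, this settles \(d=2\).

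\textbf{Step 5.} For \(d=3\), take prisms \(\mathcal P\times[-\tfrac12,\tfrac12]\) and \(Q\times[-\tfrac12,\tfrac12]\). Their transforms factor as \(\widehat{\one_{\mathcal P}}(n_1,n_2)\cdot\widehat{\one_{[-1/2,1/2]}}(n_3)\), so they agree on \(\Z^3\). Noncongruence follows from comparing the facet shapes. Non-multi-tiling follows because the prism has a facet (a copy of \(\mathcal P\)) that is not centrally symmetric up to the relevant criterion. More directly, a prism multi-tiles only if its base multi-tiles, which one would check via the Gravin--Robins--Shiryaev facet and belt conditions: the belt conditions force the base polygon to be a multi-tiler.
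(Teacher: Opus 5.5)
Your construction and overall route are the paper's. The octagons \(O_1,O_2\) are \(\Z^2\)-multi-tilers of equal area by Bolle's criterion, so Lemma~\ref{lem:Fourier-criterion-lattice-multitiling} makes their transforms agree on \(\Z^2\). The common triangles \(D\) spoil multi-tiling through the irrational red edges, edge lengths separate \(\mathcal P\) from \(Q\), and prisms handle \(d=3\). Two points need repair.

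A minor one: both octagons have area \(31\), not \(21\). \(O_1\) is the \(7\times5\) rectangle minus four triangles of area \(1\), and \(O_2\) is the \(7\times7\) square minus four triangles of area \(\tfrac92\). So the multiplicity is \(31\); since only the equality of the areas is used, nothing else changes.

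The real flaw is in Step~5. The facets of \(\mathcal P\times[-\tfrac12,\tfrac12]\) are the centrally symmetric decagons \(\mathcal P\times\{\pm\tfrac12\}\) and rectangles. So the Gravin--Robins--Shiryaev necessary condition is satisfied and gives no obstruction, and ``the belt conditions force the base to be a multi-tiler'' is not an argument. Use slicing instead. Suppose \(\sum_{\lambda\in\Lambda}\one_{\mathcal P}(x-\lambda')\,\one_{[-1/2,1/2]}(t-\lambda_3)=k\) for a.e.\ \((x,t)\in\R^2\times\R\). By Fubini, for a.e.\ \(t\) the base \(\mathcal P\) \(k\)-tiles \(\R^2\) with the multiset \(\{\lambda':\lambda\in\Lambda,\ |t-\lambda_3|\le\tfrac12\}\). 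This multiset is locally finite because \(\Lambda\) is.

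That slice multiset need not be a lattice even when \(\Lambda\) is. So Step~5 needs \(\mathcal P\) to fail to multi-tile with \emph{every} discrete multiset, which is also what the conjecture's definition of multi-tiler demands. Your Step~3 names the input for this: Kolountzakis's structure theorem for multi-tilings by convex polygons that are not parallelograms. The paper instead stops at ``not a multiple lattice tile'' via Bolle and says the prisms ``inherit'' this, leaving the reduction implicit. On this point your outline is more careful than the paper.

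Finally, make the incommensurability explicit. For each blue edge, Bolle's conditions put either twice its midpoint or its edge vector into \(\Lambda\), and both are rational. A horizontal and a slanted blue edge give two independent rational vectors, so \(\Lambda\subset\mathbb{Q}^2\). But the relative interior \(\{(\tfrac72+s(\pi-3),\tfrac12-\tfrac s2):0<s<1\}\) of a red edge contains no rational point, so condition~(i) fails.
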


\begin{proof}
We begin with \(d=2\).  Let \(O_1\) be the octagon
\begin{equation*}
\begin{aligned}
O_1:=\operatorname{conv}\bigl\{&
(-\tfrac72,\tfrac12),
(-\tfrac52,\tfrac52),
(\tfrac52,\tfrac52),
(\tfrac72,\tfrac12),
(\tfrac72,-\tfrac12),
(\tfrac52,-\tfrac52),
(-\tfrac52,-\tfrac52),
(-\tfrac72,-\tfrac12)
\bigr\},
\end{aligned}
\end{equation*}
and let \(O_2\) be the octagon
\begin{equation*}
\begin{aligned}
O_2:=\operatorname{conv}\bigl\{&
(-\tfrac72,\tfrac12),
(-\tfrac12,\tfrac72),
(\tfrac12,\tfrac72),
(\tfrac72,\tfrac12),
(\tfrac72,-\tfrac12),
(\tfrac12,-\tfrac72),
(-\tfrac12,-\tfrac72),
(-\tfrac72,-\tfrac12)
\bigr\}.
\end{aligned}
\end{equation*}
We define the two small triangles
\begin{equation*}
\begin{aligned}
D:=
&\operatorname{conv}\bigl\{
(-\tfrac72,\tfrac12),
(-\tfrac72,-\tfrac12),
(-\pi-\tfrac12,0)
\bigr\}
\cup
\operatorname{conv}\bigl\{
(\tfrac72,\tfrac12),
(\tfrac72,-\tfrac12),
(\pi+\tfrac12,0)
\bigr\},
\end{aligned}
\end{equation*}
and set
\begin{equation*}
\mathcal P:=O_1\cup D,
\qquad
Q:=O_2\cup D.
\label{eq:planar-counterexample-bodies}
\end{equation*}
These are the two convex decagons shown in
Figure~\ref{fig:planar-counterexample-bodies}; moreover, \(\mathcal P=-\mathcal P\) and
\(Q=-Q\).

There are two types of edges in these decagons.  The blue edges have
endpoints with rational coordinates, whereas each red edge has one endpoint
whose first coordinate is irrational.  Thus the midpoint of every blue edge
has rational coordinates, while the midpoint and the edge vector of every red edge have irrational first coordinate.  In particular, the rational midpoints of two linearly independent blue edges force any lattice containing their doubles to be rational.  The red edges therefore prevent
the alternatives in Theorem~\ref{thm:Bolle-multiple-lattice-tile} from holding simultaneously.  Consequently, neither \(\mathcal P\) nor \(Q\) is a multiple lattice tile.

The octagons \(O_1\) and \(O_2\) each \(31\)-tile \(\R^2\) with respect to
the lattice \(\Z^2\), and both have area \(31\).  Hence
Lemma~\ref{lem:Fourier-criterion-lattice-multitiling} gives
\begin{equation*}
\widehat{\one_{O_1}}(\xi)
=
\widehat{\one_{O_2}}(\xi)
=
\begin{cases}
31,&\xi=(0,0),
\\
0,&\xi\in\Z^2\setminus\{(0,0)\}.
\end{cases}
\end{equation*}
Since the overlaps between \(O_i\) and \(D\) have measure zero, it follows
that, for every \(\xi\in\Z^2\),
\begin{equation}
\widehat{\one_{\mathcal P}}(\xi)
=
\widehat{\one_{O_1}}(\xi)+\widehat{\one_D}(\xi)
=
\widehat{\one_{O_2}}(\xi)+\widehat{\one_D}(\xi)
=
\widehat{\one_Q}(\xi).
\label{eq:planar-counterexample-Fourier-equality}
\end{equation}
The bodies are not congruent: \(\mathcal P\) has edges of length \(5\), whereas
\(Q\) has no edge of that length.  This proves the failure of the conjecture
in dimension two.

For dimension three, take the origin-centered prisms
\begin{equation*}
\mathcal P_3:=\mathcal P\times[-\tfrac12,\tfrac12],
\qquad
Q_3:=Q\times[-\tfrac12,\tfrac12].
\end{equation*}
The product formula for Fourier transforms and
\eqref{eq:planar-counterexample-Fourier-equality} give $\widehat{\one_{\mathcal P_3}}(\xi)
=
\widehat{\one_{Q_3}}(\xi)$  for every $\xi\in\Z^3.$
The prisms are centrally symmetric, noncongruent, and inherit the failure to
be multiple lattice tiles from their planar bases.  Thus the conjecture also
fails in dimension three.
\end{proof}

\section{Periodization on a lattice torus}
\label{sec:lattice-periodization}

We let \(\mathcal L\subset\R^d\) be a full-rank lattice, and 
$\T_{\mathcal L}:=\R^d/\mathcal L$
for the associated flat torus, and
$\pi_{\mathcal L}:\R^d\longrightarrow\T_{\mathcal L}$
for the natural quotient map.  If \(F\) is a measurable fundamental domain for
\(\mathcal L\), we write 
$\det (\mathcal L)$ for the volume of $F$.  If
\(f\in L^1(\R^d)\) has compact support, its \(\mathcal L\)-periodization is
\begin{equation}
\mathcal{P}_{\mathcal L} f(x)
:=
\sum_{\ell\in \mathcal L}f(x+\ell).
\label{eq:general-periodization-definition}
\end{equation}
The sum in \eqref{eq:general-periodization-definition} is finite at every
point and  defines an element of \(L^1(\T_{\mathcal L})\).

\begin{lemma}[Periodization identity]
\label{lem:general-periodization-identity}
For every \(\xi\in \mathcal L^*\), the Fourier coefficient of
\(\mathcal{P}_{\mathcal L} f\)  is
\begin{equation}
\frac{1}{\det (\mathcal L)}
\int_F \mathcal{P}_{\mathcal L} f(x)e^{-2\pi i\langle x,\xi\rangle}\,dx
=
\frac{1}{\det (\mathcal L)}\widehat f(\xi).
\label{eq:general-periodization-fourier-coefficient}
\end{equation}
Consequently, if two compactly supported integrable functions have equal
Fourier transforms on \(\mathcal L^*\), then their \(\mathcal L\)-periodizations agree almost
everywhere on \(\T_{\mathcal L}\).
\end{lemma}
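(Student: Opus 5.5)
The plan is to compute the Fourier coefficient directly, unfolding the sum over \(\mathcal L\) into an integral over all of \(\R^d\). Fix a measurable fundamental domain \(F\) for \(\mathcal L\), so that the translates \(F+\ell\), \(\ell\in\mathcal L\), partition \(\R^d\) up to measure zero. Since \(f\) has compact support, only finitely many \(\ell\in\mathcal L\) satisfy \((\operatorname{supp} f-\ell)\cap F\neq\emptyset\). This is the one place where compact support (or integrability) is used. It lets us interchange the sum over \(\ell\) and the integral over \(F\) with no convergence issues; alternatively, Fubini--Tonelli applies because \(\sum_\ell\int_F|f(x+\ell)|\,dx=\|f\|_{L^1}<\infty\).

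After the interchange we have
\begin{equation*}
\int_F \mathcal{P}_{\mathcal L} f(x)e^{-2\pi i\langle x,\xi\rangle}\,dx
=\sum_{\ell\in\mathcal L}\int_F f(x+\ell)e^{-2\pi i\langle x,\xi\rangle}\,dx .
\end{equation*}
In the \(\ell\)-th term substitute \(y=x+\ell\). The factor becomes \(e^{-2\pi i\langle y,\xi\rangle}e^{2\pi i\langle \ell,\xi\rangle}\), and the second exponential equals \(1\) because \(\xi\in\mathcal L^*\) gives \(\langle\ell,\xi\rangle\in\Z\). The terms are then integrals of \(f(y)e^{-2\pi i\langle y,\xi\rangle}\) over \(F+\ell\), and these sets tile \(\R^d\). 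Summing therefore gives \(\widehat f(\xi)\), and dividing by \(\det(\mathcal L)\) yields \eqref{eq:general-periodization-fourier-coefficient}. Because \(\mathcal P_{\mathcal L}f\) is \(\mathcal L\)-periodic and the integrand is \(\mathcal L\)-periodic when \(\xi\in\mathcal L^*\), the left side does not depend on the choice of \(F\). So this really is the Fourier coefficient on \(\T_{\mathcal L}\).

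For the consequence, suppose \(f,g\) are compactly supported and integrable with \(\widehat f=\widehat g\) on \(\mathcal L^*\). Then \(h:=\mathcal P_{\mathcal L}f-\mathcal P_{\mathcal L}g=\mathcal P_{\mathcal L}(f-g)\in L^1(\T_{\mathcal L})\), and all of its Fourier coefficients vanish. The characters \(x\mapsto e^{2\pi i\langle x,\xi\rangle}\), \(\xi\in\mathcal L^*\), are exactly the characters of \(\T_{\mathcal L}\). We then invoke uniqueness of Fourier coefficients for \(L^1\) functions on a torus. Concretely, choose a linear map \(A\) with \(A\Z^d=\mathcal L\), so that \(A^{-T}\Z^d=\mathcal L^*\). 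Then \(h\circ A\) is a \(\Z^d\)-periodic \(L^1\) function whose coefficients vanish, and the classical uniqueness theorem (for instance via Fejér means converging in \(L^1\)) gives \(h=0\) almost everywhere.

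The only point needing any care is this uniqueness step, in particular the reduction to the standard torus \(\R^d/\Z^d\). The rest is the routine unfolding computation, where the dual-lattice condition is exactly what removes the phase factors.
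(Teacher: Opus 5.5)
Your proposal is correct and follows the paper's proof essentially step for step. Both unfold \(\int_F\) over the translates \(F+\ell\), use \(\xi\in\mathcal L^*\) to remove the phase \(e^{2\pi i\langle\ell,\xi\rangle}\), and finish with uniqueness of Fourier coefficients in \(L^1(\T_{\mathcal L})\); you make that last step explicit via \(A\Z^d=\mathcal L\) and Fejér means.

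One small point: your claim that only finitely many \(\ell\) contribute over \(F\) needs \(F\) to be bounded, and a measurable fundamental domain can be unbounded. So it is your Fubini--Tonelli justification, \(\sum_\ell\int_F|f(x+\ell)|\,dx=\|f\|_{L^1}<\infty\), that actually covers the general case. The paper's remark that the sum is ``locally finite, so no sum-integral interchange is required'' glosses over the same point.
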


\begin{proof}
Since \(\langle\ell,\xi\rangle\in\Z\) for every \(\ell\in \mathcal L\) and
\(\xi\in \mathcal L^*\), unfolding the integral over the translates of \(F\) gives
\begin{align*}
\int_F \mathcal{P}_{\mathcal L} f(x)e^{-2\pi i\langle x,\xi\rangle}\,dx
&=
\sum_{\ell\in \mathcal L}
\int_F f(x+\ell)e^{-2\pi i\langle x,\xi\rangle}\,dx
\\
&=
\sum_{\ell\in \mathcal L}
\int_{F+\ell} f(y)e^{-2\pi i\langle y,\xi\rangle}\,dy
\\
&=
\int_{\R^d}f(y)e^{-2\pi i\langle y,\xi\rangle}\,dy
\\
&=
\widehat f(\xi).
\end{align*}
Compact support makes the sum locally finite, so no sum-integral interchange is required.  Dividing by
\(\det (\mathcal L)\) proves
\eqref{eq:general-periodization-fourier-coefficient}.  The final assertion of the Lemma
follows from uniqueness of Fourier series in \(L^1(\T_{\mathcal L})\); see, for
example, \cite{Katznelson2004,SteinShakarchi2003}.
\end{proof}

\begin{lemma}[No aliasing]
\label{lem:general-no-aliasing}
Let \(\mathcal K\subset\R^d\) be compact.  The restriction of \(\pi_{\mathcal L}\) to \(\mathcal K\)
is injective if and only if $(\mathcal K-\mathcal K)\cap \mathcal L=\{0\}.$
Under this condition,
\begin{equation}
\mathcal{P}_{\mathcal L}\one_{\mathcal K}
=
\one_{\pi_{\mathcal L}(\mathcal K)}
\,\text{on }\T_{\mathcal L}.
\label{eq:general-periodization-indicator-projection}
\end{equation}
\end{lemma}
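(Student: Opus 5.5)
The plan is to prove the two assertions separately: first the equivalence, then the periodization identity \eqref{eq:general-periodization-indicator-projection}.

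\emph{The equivalence.} For the forward direction, suppose \(\pi_{\mathcal L}\) is injective on \(\mathcal K\) and \(x-y\in(\mathcal K-\mathcal K)\cap\mathcal L\) with \(x,y\in\mathcal K\). Then \(\pi_{\mathcal L}(x)=\pi_{\mathcal L}(y)\), so injectivity forces \(x=y\) and hence \(x-y=0\). Conversely, suppose \((\mathcal K-\mathcal K)\cap\mathcal L=\{0\}\) and \(\pi_{\mathcal L}(x)=\pi_{\mathcal L}(y)\) for some \(x,y\in\mathcal K\). Then \(x-y\in\mathcal L\), and also \(x-y\in\mathcal K-\mathcal K\), so \(x-y=0\). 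If \(\mathcal K\) is empty, both conditions hold vacuously, since \(0\notin\emptyset-\emptyset\); we may read \(\{0\}\) loosely in that case or simply assume \(\mathcal K\neq\emptyset\). Compactness plays no role here.

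\emph{The periodization identity.} Fix \(x\in\R^d\). Since \(\mathcal K\) is compact and \(\mathcal L\) is discrete, the sum
\[
\mathcal{P}_{\mathcal L}\one_{\mathcal K}(x)=\#\{\ell\in\mathcal L: x+\ell\in\mathcal K\}
\]
is finite. This count is positive exactly when the coset \(x+\mathcal L\) meets \(\mathcal K\), that is, exactly when \(\pi_{\mathcal L}(x)\in\pi_{\mathcal L}(\mathcal K)\).

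It remains to show the count is at most \(1\). If \(x+\ell_1\) and \(x+\ell_2\) both lie in \(\mathcal K\), then their difference \(\ell_1-\ell_2\) lies in \((\mathcal K-\mathcal K)\cap\mathcal L=\{0\}\), so \(\ell_1=\ell_2\). Hence the periodization takes only the values \(0\) and \(1\), and it equals \(\one_{\pi_{\mathcal L}(\mathcal K)}(\pi_{\mathcal L}(x))\). The function \(\mathcal{P}_{\mathcal L}\one_{\mathcal K}\) is \(\mathcal L\)-invariant, so it descends to \(\T_{\mathcal L}\), and the identity holds pointwise there, not merely almost everywhere.

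There is no real obstacle. The only point needing care is to note that \(\pi_{\mathcal L}(\mathcal K)\) is compact, hence measurable, so that its indicator defines a legitimate element of \(L^1(\T_{\mathcal L})\) consistent with Lemma~\ref{lem:general-periodization-identity}.
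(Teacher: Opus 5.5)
Your proof is correct and matches the paper's argument: the equivalence follows from \(\pi_{\mathcal L}(x)=\pi_{\mathcal L}(y)\iff x-y\in\mathcal L\), and the identity follows because the periodization counts the representatives in \(\mathcal K\) of the coset \(x+\mathcal L\), of which there is at most one. Your side remark about \(\mathcal K=\emptyset\) correctly flags a technicality that the paper passes over silently, since it applies the lemma only to bodies.
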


\begin{proof}
For \(x,y\in \mathcal K\), one has
\begin{equation*}
\pi_{\mathcal L}(x)=\pi_{\mathcal L}(y)
\quad\Longleftrightarrow\quad
x-y\in \mathcal L.
\end{equation*}
This proves the first assertion.  Under injectivity, each class in
\(\T_{\mathcal L}\) has at most one representative in \(\mathcal K\).  The left-hand side of
\eqref{eq:general-periodization-indicator-projection} counts the
representatives of the class \(\pi_{\mathcal L}(x)\) that lie in \(\mathcal K\); it is therefore
equal to one on \(\pi_{\mathcal L}(\mathcal K)\) and zero off \(\pi_{\mathcal L}(\mathcal K)\).
\end{proof}

\begin{lemma}[Regularity of the projected sets]
\label{lem:regularity-of-projected-sets}
Let \(\mathcal K\subset\R^d\) be a finite union of convex bodies, and suppose that
\(\pi_{\mathcal L}|_{\mathcal K}\) is injective.  Then
\begin{equation*}
\pi_{\mathcal L}(\mathcal K)
=
\overline{\interior\bigl(\pi_{\mathcal L}(\mathcal K)\bigr)}.
\end{equation*}
\end{lemma}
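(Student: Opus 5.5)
The idea is to show that $\pi_{\mathcal L}|_{\mathcal K}$ is a homeomorphism onto its image, with that image closed, and then to move the corresponding regularity statement from $\R^d$ to the torus.

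\emph{Step 1: regularity of $\mathcal K$ in $\R^d$.} Write $\mathcal K=\bigcup_{j=1}^m C_j$ with each $C_j$ a convex body. Each $C_j$ is compact, convex and has nonempty interior, so $C_j=\overline{\interior(C_j)}$. This is the standard fact that every point of a convex body is a limit of points on segments to an interior point, and those points are interior. Hence
\begin{equation*}
\mathcal K=\bigcup_j\overline{\interior(C_j)}\subseteq\overline{\interior(\mathcal K)}\subseteq\mathcal K ,
\end{equation*}
where the last inclusion uses that $\mathcal K$ is closed. So $\mathcal K=\overline{\interior(\mathcal K)}$.

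\emph{Step 2: transfer through $\pi_{\mathcal L}$.} The map $\pi_{\mathcal L}$ is continuous and open, being a covering map. Since $\mathcal K$ is compact, $\pi_{\mathcal L}(\mathcal K)$ is compact and hence closed in $\T_{\mathcal L}$. Openness gives $\pi_{\mathcal L}(\interior\mathcal K)\subseteq\interior(\pi_{\mathcal L}(\mathcal K))$. By continuity and Step 1,
\begin{equation*}
\pi_{\mathcal L}(\mathcal K)=\pi_{\mathcal L}\bigl(\overline{\interior\mathcal K}\bigr)\subseteq\overline{\pi_{\mathcal L}(\interior\mathcal K)}\subseteq\overline{\interior(\pi_{\mathcal L}(\mathcal K))}.
\end{equation*}
The reverse inclusion holds because $\pi_{\mathcal L}(\mathcal K)$ is closed and contains its interior. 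Equality follows.

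Injectivity is not actually needed for the inclusion, since openness of $\pi_{\mathcal L}$ does all the work. I would still note that under injectivity $\pi_{\mathcal L}|_{\mathcal K}$ is a homeomorphism onto its image, because it is a continuous bijection from a compact space to a Hausdorff space. The later argument presumably relies on that.

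The only step requiring care is the convex-body fact $C=\overline{\interior C}$. I would prove it directly. Fix $x_0\in\interior C$ and a ball $B(x_0,r)\subseteq C$. For $x\in C$ and $t\in[0,1)$, the point $(1-t)x_0+tx$ has the ball $B\bigl((1-t)x_0+tx,(1-t)r\bigr)$ inside $C$ by convexity, so it is interior. Letting $t\to1$ gives $x$ as a limit of interior points.
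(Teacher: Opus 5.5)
Your proof is correct and is essentially the paper's argument: you get $\mathcal K=\overline{\interior(\mathcal K)}$ from the convex pieces, use openness of $\pi_{\mathcal L}$ to obtain $\pi_{\mathcal L}(\interior\mathcal K)\subseteq\interior(\pi_{\mathcal L}(\mathcal K))$, and close the same chain of inclusions using the compactness of $\pi_{\mathcal L}(\mathcal K)$. You are also right that injectivity plays no role here; the paper's proof does not use it either, and the homeomorphism observation is only needed later, in the proof of Theorem~\ref{thm:general-lattice-reconstruction}.
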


\begin{proof}
Because every convex body is the closure of its interior, a finite union
\(\mathcal K\) of convex bodies satisfies $\mathcal K=\overline{\interior(\mathcal K)}.$
The quotient map \(\pi_{\mathcal L}\) is a local homeomorphism.  Therefore, if
\(x\in\interior(\mathcal K)\), a sufficiently small open neighborhood of \(x\) is
mapped to an open subset of \(\pi_{\mathcal L}(\mathcal K)\), and hence
\begin{equation}
\pi_{\mathcal L}\bigl(\interior(\mathcal K)\bigr)
\subseteq
\interior\bigl(\pi_{\mathcal L}(\mathcal K)\bigr).
\label{eq:interior-projection-inclusion}
\end{equation}
Since \(\mathcal K\) is compact, \(\pi_{\mathcal L}(\mathcal K)\) is closed.  
Since
\(\mathcal K=\overline{\operatorname{int}(\mathcal K)}\),
the continuity of \(\pi_{\mathcal L}\), the inclusion~\eqref{eq:interior-projection-inclusion},
and the closedness of \(\pi_{\mathcal L}(\mathcal K)\) give us:
\begin{equation*}
\pi_{\mathcal L}(\mathcal K)
=
\pi_{\mathcal L}\!\left(
\overline{\operatorname{int}(\mathcal K)}
\right)
\subseteq
\overline{
\pi_{\mathcal L}\!\left(\operatorname{int}(\mathcal K)\right)
}
\subseteq
\overline{
\operatorname{int}\!\left(\pi_{\mathcal L}(\mathcal K)\right)
}
\subseteq
\pi_{\mathcal L}(\mathcal K).
\end{equation*}
\end{proof}

\section{Proof of Theorem~\ref{thm:general-lattice-reconstruction}}
\label{sec:proof-general-lattice-reconstruction}

\begin{proof}
By Lemma~\ref{lem:general-periodization-identity} and the sampling
hypothesis \eqref{eq:general-lattice-sampling}, $\mathcal{P}_{\mathcal L}\one_{\mathcal P} 
=
\mathcal{P}_{\mathcal L}\one_Q$ almost everywhere on $\T_{\mathcal L}$.
The sparse lattice property and Lemma~\ref{lem:general-no-aliasing} now give
\begin{equation}
\one_{\pi_{\mathcal L}(\mathcal P)}
=
\one_{\pi_{\mathcal L}(Q)}
\qquad\text{almost everywhere on }\T_{\mathcal L}.
\label{eq:general-equal-projected-indicators}
\end{equation}

Let $A:=\pi_{\mathcal L}(\mathcal P)$ and $B:=\pi_{\mathcal L}(Q).$ Both sets are compact, and Lemma~\ref{lem:regularity-of-projected-sets}
shows that $A=\overline{\interior(A)}$ and $B=\overline{\interior(B)}$.

If $\interior(A)\setminus B$ were nonempty, then it would be a
nonempty open subset of the torus and hence would have positive Haar
measure.  However, on this set we have  $\mathbf 1_A=1$ and
$\mathbf 1_B=0$, so
\[
\interior(A)\setminus B
\subseteq
\{x:\mathbf 1_A(x)\neq\mathbf 1_B(x)\},
\]
contradicting the almost-everywhere equality
\eqref{eq:general-equal-projected-indicators}.

Thus $\interior(A)\subseteq B$.  Since $A$ is regular closed and
$B$ is closed, taking closures gives
$$
A=\overline{\interior(A)}
\subseteq~\overline B=B.$$ 
Interchanging $A$ and $B$ gives, in the same way, $B\subseteq A$.
Consequently,
\begin{equation}
\pi_{\mathcal L}(\mathcal P)
=
A
=
B
=
\pi_{\mathcal L}(Q).
\label{eq:general-equal-torus-images}
\end{equation}

The restrictions of \(\pi_{\mathcal L}\) to \(\mathcal P\) and \(Q\) are continuous
bijections from compact spaces onto the common Hausdorff space in
\eqref{eq:general-equal-torus-images}, and hence are homeomorphisms.  It
follows that
\begin{equation*}
\Phi
:=
(\pi_{\mathcal L}|_Q)^{-1}\circ(\pi_{\mathcal L}|_{\mathcal P})
:
\mathcal P\longrightarrow Q
\end{equation*}
is a homeomorphism satisfying $\Phi(x)-x\in \mathcal L$ for every
$x\in \mathcal P$.
The map \(x\mapsto\Phi(x)-x\) is continuous and takes values in the
discrete set \(\mathcal L\).  Since the body \(\mathcal P\) is connected, this map is constant.  Therefore
there is an \(\ell\in \mathcal L\) such that $\Phi(x)=x+\ell$  for every $x\in \mathcal P$, and consequently \(Q=\mathcal P+\ell\).
\end{proof}


\section{Remarks}
\label{sec:remarks}

\begin{remark}[Boundary-wrapping ambiguity]
The sparse lattice property in
Theorem~\ref{thm:general-lattice-reconstruction} cannot be weakened to
\begin{equation}
\interior(\mathcal K-\mathcal K)\cap \mathcal L=\{0\}.
\label{eq:interior-sparse-lattice-condition}
\end{equation}
Indeed, take \(\mathcal L=\Z\), \(\mathcal P=[0,1]\), and \(Q=[a,a+1]\), where
\(a\notin\Z\).  The two intervals have identical Fourier transforms on
\(\mathcal L^*=\Z\), but \(Q\neq \mathcal P+\ell\) for every \(\ell\in \mathcal L\).  Here
\eqref{eq:interior-sparse-lattice-condition} holds for both intervals,
whereas the full difference sets contain the nonzero lattice points
\(\pm1\).  This is precisely the boundary-wrapping ambiguity excluded by
\eqref{eq:main-no-aliasing-hypothesis}.
\hfill \(\lozenge\)
\end{remark}

\begin{remark}[The unavoidable translation ambiguity]
The conclusion of Theorem~\ref{thm:general-lattice-reconstruction} is best
possible without an additional normalization.  If \(Q=\mathcal P+\ell_0\) for some
\(\ell_0\in \mathcal L\), then for every \(\xi\in \mathcal L^*\),
\begin{align*}
\widehat{\one_Q}(\xi)
&=
e^{-2\pi i\langle\ell_0,\xi\rangle}
\widehat{\one_{\mathcal P}}(\xi)
=
\widehat{\one_{\mathcal P}}(\xi),
\end{align*}
because \(\langle\ell_0,\xi\rangle\in\Z\).
\hfill \(\lozenge\)
\end{remark}

\noindent
In light of Lemma \ref{lem:Fourier-criterion-lattice-multitiling}, we mention a fascinating, related open problem due to Kobayashi.
\begin{conjecture}\cite{Kobayashi1994}
\label{eq:Kobayashi-zero-set-conclusion}
Let \(\mathcal P,Q\subset\mathbb R^d\) be convex bodies, and define the
complex Fourier zero set of \(\mathcal P\) by
\begin{equation*}
Z_{\mathbb C}(\mathcal P)
:=
\left\{
\zeta\in\mathbb C^d:
\widehat{\one_{\mathcal P}}(\zeta)=0
\right\},
\end{equation*}
where \(\widehat{\one_{\mathcal P}}\) denotes the entire extension of the
Fourier transform of \(\one_{\mathcal P}\) to \(\mathbb C^d\).  If
$Z_{\mathbb C}(\mathcal P)=Z_{\mathbb C}(Q),$
then there exist a vector \(t\in\mathbb R^d\) such that
$Q=\mathcal P+t$.
\end{conjecture}

Kobayashi proved the conjecture for planar convex bodies with smooth
boundary and everywhere positive curvature.  Subsequent work extended
the asymptotic description of the complex Fourier zeros to bodies of
finite regularity.  Nevertheless, Conjecture \ref{eq:Kobayashi-zero-set-conclusion} remains open for general convex bodies, including arbitrary
planar convex bodies.


\end{document}